\newcommand{\diagram}[3]{\matrix (#1) [matrix of math nodes,row
  sep={#2},column sep={#3},text height=1.5ex,text
  depth=0.25ex]}
\numberwithin{equation}{section}
\newcommand{\End}{\mbox{\rm End\,}}
\theoremstyle{plain}
\numberwithin{equation}{section}
\newtheorem{theorem}[equation]{Theorem}
\newtheorem{corollary}[equation]{Corollary}
\newtheorem{lemma}[equation]{Lemma}
\theoremstyle{definition}
\newtheorem*{definition}{Definition}
\newtheorem{example}[equation]{Example}
\theoremstyle{definition}
\newtheorem{remark}[equation]{Remark}
\newcommand{\C}{\mathscr{C}}
\newcommand{\M}{\mathscr{M}}
\newcommand{\N}{{\mathbb N}}
\newcommand{\Z}{{\mathbb Z}}
\newcommand\Ext{\operatorname{Ext}}
\newcommand\Homol{\operatorname{H}}
\newcommand\coh{\Homol}
\newcommand\cx{\operatorname{cx}}
\newcommand\op{\operatorname{op}}
\newcommand\ot{\otimes}
\renewcommand\mod{\operatorname{mod}}
\newcommand\stmod{\operatorname{\underline{mod}}}
\newcommand\Hom{\operatorname{Hom}}
\newcommand\rad{\operatorname{rad}}
\newcommand\Kdim{\operatorname{Kdim}}
\newcommand\VM{V_{\M}}
\newcommand\unit{\mathbf{1}}
\newcommand{\DOT}{\setlength{\unitlength}{1pt}\begin{picture}(2.5,2)(1,1)\put(2,3){\circle*{2}}\end{picture}}
\newcommand{\bu}{\DOT}
\DeclareMathOperator{\Vect}{Vec}
\DeclareMathOperator{\Ver}{Ver}
\newcommand{\Coh}{\operatorname{H}\nolimits}
\newcommand{\Ho}{\operatorname{\Coh^{\bu}}\nolimits}
\newcommand{\Spec}{\operatorname{Spec}\nolimits}
\newcommand{\az}{\mathfrak{a}}
\newcommand{\m}{\mathfrak{m}}
\newcommand{\p}{\mathfrak{p}}
\newcommand{\q}{\mathfrak{q}}
\newcommand{\rz}{\mathfrak{r}}
\def\blx@maxline{77}
\begin{document}
\title[Representation type of a finite tensor category]
{On the representation type of a\\
finite tensor category}

\author[P.A.\ Bergh, K.\ Erdmann, J.Y.\ Plavnik, S.\ Witherspoon]{Petter Andreas Bergh, Karin Erdmann, Julia Yael Plavnik, \\
 Sarah Witherspoon}

\address{Petter Andreas Bergh \\ Institutt for matematiske fag \\ NTNU \\ N-7491 Trondheim \\ Norway} 
\email{petter.bergh@ntnu.no}
\address{Karin Erdmann \\ Mathematical Institute \\ University of Oxford \\ Andrew Wiles Building \\ Radcliffe Observatory Quarter \\ Oxford OX2 6GG \\ United Kingdom} 
\email{erdmann@maths.ox.ac.uk}
\address{Julia Yael Plavnik \\ Department of Mathematics \\ Indiana University \\ Bloomington \\ Indiana 47405 \\ USA \& Department of Mathematics and Data Science, Vrije Universiteit Brussel, Pleinlaan 2, 1050 Brussels, Belgium}
\email{jplavnik@iu.edu}
\address{Sarah Witherspoon \\ Department of Mathematics \\ Texas A \& M University \\ College Station \\ Texas 77843 \\ USA}
\email{sjw@tamu.edu}
\subjclass[2020]{16G60, 16T05, 18M05, 18M15}
\keywords{Finite tensor categories; representation type}


\begin{abstract}
Given an exact module category over a finite tensor category with finitely generated cohomology, we show that if there exists an object of complexity at least three, then the category is of wild representation type. In particular, if the Krull dimension of the cohomology ring of the finite tensor category is at least three, then this category is wild.
\end{abstract}

\maketitle

\section{Introduction}

In 1990, Rickard~\cite{R} showed that representation type can be discernible via homological information, for example in finite group algebras. Almost twenty years later, Farnsteiner~\cite{F} realized this connection for finite dimensional cocommutative Hopf algebras using the support variety theory that had just been developed for these algebras. Specifically, he proved that such a Hopf algebra has wild representation type if its cohomology has Krull dimension at least three, or equivalently if it has a module of complexity at least three. The proof relies heavily on prior work of Drozd~\cite{D} and Crawley-Boevey~\cite{CB}, in particular the key role played by the Auslander-Reiten translate. Farnsteiner's techniques apply more generally to settings where cohomology is finitely generated and there is a well behaved support variety theory. Other authors used these techniques successfully for finite dimensional self-injective algebras~\cite{BS,FW2} and braided finite dimensional Hopf algebras~\cite{FW1}. These three papers require cohomological finiteness conditions to ensure a robust support variety theory. In~\cite{BS}, such a finiteness hypothesis is placed on the cohomology of a module, in~\cite{FW1} on Hopf algebra cohomology, and in~\cite{FW2} on Hochschild cohomology. In these papers, the results are applied to certain small quantum groups, Hecke algebras, Nichols algebras, and more. The theory has been further adapted to prove representation type results for Lie superalgebras~\cite{L}. 

In this paper, we transport these methods to finite tensor categories and their finite module categories, applying the support variety theory developed in~\cite{BPW1,BPW2}. Our main result is Theorem~\ref{thm:main}, stating that under a cohomological finite generation condition, such a module category is of wild representation type if it has an object of complexity at least three. Here, we define wild representation type in Section~\ref{sec:prelim} as it is defined for categories of modules; each such category is indeed equivalent to a category of modules for a finite dimensional algebra. The proof of the theorem follows the plan of earlier proofs: support varieties are used to identify  a one parameter family of nonisomorphic indecomposable objects (of some fixed length), each of complexity at least two. Among the key ingredients are objects generalizing Carlson's $L_{\zeta}$-modules, their action on the module category, and an application of Crawley-Boevey's results to our category setting (Lemma~\ref{lem:wild}).

As mentioned above, finite tensor categories and their finite module categories are known to be equivalent to categories of modules for finite dimensional algebras. We recall the needed details in Remark~\ref{rem:covers}(3) below and use this fact in Lemma~\ref{lem:wild}. One distinction between our work here and past results on finite dimensional algebras is in the hypotheses; finiteness conditions here are imposed on the cohomology of the acting category. Any of its finite module categories then enjoys the representation type consequences of that action. 

Several classes of examples are included to illustrate the theory: In Example~\ref{example:local}, we work in positive characteristic~$p$ and explore some local algebras, focusing particularly on some connected Hopf algebras of dimension $p^3$. In Examples~\ref{example:Verlinde} and~\ref{example:crossed}, we describe how the theory applies to higher Verlinde categories and crossed product categories.

\subsection*{Acknowledgments}
P.\ A.\ Bergh would like to thank the Mathematical Institute at the University of Oxford, where part of this work was done when he visited K.\ Erdmann in October 2024. The visit was supported by LMS Scheme 4 grant no.\ 42355. J.\ Y.\ Plavnik was partially supported by NSF grant DMS-2146392 and by Simons Foundation Award 889000 as part of the Simons Collaboration on Global Categorical Symmetries. S.\ Witherspoon was partially supported by NSF grant 2001163.

\section{Representation type and complexity}\label{sec:prelim}

Throughout this section, we fix an algebraically closed field $k$. We start by recalling the notion of representation type of a finite dimensional $k$-algebra $A$. Unless otherwise specified, whenever we speak of modules, we mean finitely generated left modules. The category of such $A$-modules is denoted by $\mod A$.

The algebra $A$ has \emph{finite representation type} if it has only finitely many isomorphism classes of indecomposable modules. In this case, then, one can classify all the indecomposable modules by simply listing them. If $A$ does not have finite representation type, then it is of \emph{infinite representation type}. A classification of the indecomposable modules may still be possible, if $A$ has \emph{tame representation type}. This means that the representation type is infinite, but for each integer $d \ge 1$ the following holds: almost all the indecomposable modules of dimension $d$ belong to one of a finite number of one-parameter families. Specifically, for each $d \ge 1$ there exists a finite set $B_1, \dots, B_{n_d}$ of $A$-$k[x]$-bimodules satisfying the following: each $B_i$ is free of rank $d$ as a module over $k[x]$, and every $A$-module of dimension $d$ -- except possibly finitely many -- is isomorphic to $B_i \ot_{k[x]} k[x]/(x- \alpha )$ for some $i$ and $\alpha \in k$. Note that since $k$ is algebraically closed, the $k[x]$-modules of the form $k[x]/(x- \alpha )$ are precisely the simple ones.

If $A$ is of infinite representation type but not tame, then a classification of indecomposable modules is beyond reach. Denote by $k \langle x,y \rangle$ the free algebra generated by two non-commuting variables $x$ and $y$. The algebra $A$ has \emph{wild representation type} if there exists an exact embedding from the category of finite dimensional $k \langle x,y \rangle$-modules into $\mod A$, in such a way that the image of an indecomposable $k \langle x,y \rangle$-module is indecomposable in $\mod A$. It is well-known (see \cite[Theorem XIX.1.11]{SiS}) that this is equivalent to the following: for \emph{every} finite dimensional $k$-algebra $B$, there exists such an embedding from $\mod B$ into $\mod A$. In other words, a finite dimensional $k$-algebra is wild if its module category contains the module category of every finite dimensional $k$-algebra. 

It turns out that the two kinds of algebras of infinite representation type that we have discussed are actually all there are. This is a highly nontrivial fact, and was proved by Drozd (see \cite{D} and \cite{CB}). Specifically, he proved that if a finite dimensional algebra over an algebraically closed field is of infinite representation type, then it is either tame or wild, and these are mutually exclusive.

We now turn to the categories that are of interest to us: finite tensor categories and their module categories. For a detailed explanation of the terminology and concepts that follow, we refer to  \cite{EGNO}. A finite tensor category over $k$ is a triple $\left ( \C, \ot, \unit \right )$, where $\C$ is a locally finite $k$-linear abelian category having finitely many isomorphism classes of simple objects, and each of these has a projective cover. Moreover, the tensor product $\ot$ is a bifunctor
\begin{center}
\begin{tikzpicture}
\diagram{d}{3em}{3em}{
 \C \times \C & \C \\
 };
\path[->, font = \scriptsize, auto]
(d-1-1) edge node{$\ot$} (d-1-2);
\end{tikzpicture}
\end{center}
which is bilinear on morphisms and associative up to functorial isomorphisms. Furthermore, the object $\unit \in \C$ is a unit object with respect to the tensor product, and the triple $\left ( \C, \ot, \unit \right )$ is a monoidal category. Finally, the unit object is simple, and $\C$ is rigid in the sense that every object has a left and a right dual. A whole range of important properties follow from the latter, and here are three of them: the category $\C$ is quasi-Frobenius (that is, the projective and the injective objects coincide), the projective objects form a two-sided ideal, and the tensor product is bi-exact (see \cite[Proposition 6.1.3, Proposition 4.2.12, Proposition 4.2.1]{EGNO}).

Now let $\left ( \C, \ot, \unit \right )$ be such a finite tensor category, and $\left ( \M, \ast \right )$ a left module category over $\C$. Thus $\M$ is a locally finite $k$-linear abelian category, together with a bifunctor (the module product)
\begin{center}
\begin{tikzpicture}
\diagram{d}{3em}{3em}{
 \C \times \M & \M \\
 };
\path[->, font = \scriptsize, auto]
(d-1-1) edge node{$\ast$} (d-1-2);
\end{tikzpicture}
\end{center}
which, as with $\ot$, is bilinear on morphisms and associative up to functorial isomorphisms: $(A \ot B) \ast M \simeq A \ast (B \ast M)$ for $A,B \in \C$ and $M \in \M$. Furthermore, the unit object $\unit \in \C$ is a unit object also for the module product, and this product is exact in the first argument. It follows from \cite[Proposition 7.1.6]{EGNO} that the module product is also exact in the second argument, and therefore bi-exact. Moreover, it follows from \emph{loc.~cit}.\ that if $P$ is a projective object in $\M$, then $A \ast P$ is projective for every $A \in \C$. The module category is called \emph{exact} if the latter holds with opposite arguments: if $Q$ is a projective object in $\C$, then $Q \ast M$ is projective for every $M \in \M$. When this holds, then $\M$ is quasi-Frobenius, by \cite[Corollary 7.6.4]{EGNO}, and it also has enough projective objects, by \cite[Lemma 7.6.1]{EGNO}. Finally, we say that $\M$ is \emph{finite} if it has finitely many isomorphism classes of simple objects.

\sloppy The typical example of a finite tensor category is $\mod H$, where $H$ is a finite-dimensional Hopf algebra over $k$. The tensor product is then not necessarily commutative. However, if $G$ is a finite group, then the tensor product in $\mod kG$ is commutative up to natural isomorphism, and in a strong sense, making this a \emph{symmetric} finite tensor category (that is, the natural isomorphism squares to the identity). Note that every finite tensor category is trivially a finite exact module category over itself, with the tensor product as module product. 

In the following remark, we collect some facts that will be of use to us throughout. Let us therefore fix a finite tensor category $\left ( \C, \ot, \unit \right )$, together with a finite exact $\C$-module category $\left ( \M, \ast \right )$.

\begin{remark}\label{rem:covers}
(1) One of the defining properties of $\left ( \C, \ot, \unit \right )$ is that every simple object $S$ has a projective cover. This is defined in \cite{EGNO} as an epimorphism $p \colon Q \longrightarrow S$ with $Q$ projective, and such that the following holds: if $p' \colon Q' \longrightarrow S$ is an epimorphism with $Q'$ projective, then there exists an epimorphism $f \colon Q' \longrightarrow Q$ (necessarily split) with $p' = p \circ f$. One can show that every object admits such a cover, and that the cover map is right minimal (see, e.g.~\cite[\S\S I.2, I.4]{ARS}). Moreover, the same is true for objects in $\M$. 
To see this, take any object $M \in \M$, and let $p \colon P \longrightarrow M$ be an epimorphism with $P$ projective of minimal length. By an argument similar to the proof of \cite[Proposition I.2.1]{ARS}, there exists an object $N \in \M$, together with a commutative diagram
\begin{center}
\begin{tikzpicture}
\diagram{d}{2.5em}{4em}{
 N & P & N \\
 & M \\
 };
\path[->, font = \scriptsize, auto]
(d-1-1) edge node{$f$} (d-1-2)
(d-1-2) edge node{$g$} (d-1-3)
(d-1-1) edge node[below]{$q$} (d-2-2)
(d-1-2) edge node{$p$} (d-2-2)
(d-1-3) edge node{$q$} (d-2-2);
\end{tikzpicture}
\end{center}
in which the morphism $q$ is right minimal. This morphism must be an epimorphism, since $p$ is and $p = q \circ g$. Moreover, by the right minimality of $q$, the composition $g \circ f$ is an isomorphism, so that $N$ is a direct summand of $P$. Thus $N$ is projective, and since we chose $P$ to have minimal length we see that $N \simeq P$, with $f$ and $g$ isomorphisms. Therefore $p$ is right minimal, and thus it also is a projective cover in the sense above.

(2) Let $S_1, \dots, S_n$ be a complete set of representatives for the isomorphism classes of simple objects in $\M$. Their projective covers $P_1, \dots, P_n$ must be indecomposable, and these are all the indecomposable projective objects in $\M$, up to isomorphism. Namely, an induction argument on length shows that every projective object in $\M$ is a direct sum of (some of) the $P_i$.

(3) By the above, the module category $\M$ has projective generators, that is, projective objects $P$ with the property that every projective object in $\M$ is a direct summand of a direct sum of copies of $P$; the object $P_1 \oplus \cdots \oplus P_n$ is one such generator. Denote the finite dimensional $k$-algebra $\End_{\M}(P)^{\op}$ by $\Gamma_P$. By an argument similar to the proof of \cite[Proposition II.2.5]{ARS}, the functor
\begin{center}
\begin{tikzpicture}
\diagram{d}{3em}{6em}{
\M & \mod \Gamma_P \\
 };
\path[->, font = \scriptsize, auto]
(d-1-1) edge node{$\Hom_{\M}(P,-)$} (d-1-2);
\end{tikzpicture}
\end{center}
is an equivalence of $k$-linear abelian categories. In particular, if $P$ and $P'$ are projective generators in $\M$, then $\Gamma_P$ and $\Gamma_{P'}$ are Morita equivalent, and therefore have the same representation type.
\end{remark}

In light of the last remark, we make the following definition.

\begin{definition}
Let $\left ( \C, \ot, \unit \right )$ be a finite tensor category over an algebraically closed field $k$, and $\left ( \M, \ast \right )$ a finite exact $\C$-module category. The \emph{representation type} of $\M$ is the representation type of the finite dimensional $k$-algebra $\End_{\M}(P)^{\op}$, where $P$ is a projective generator in $\M$.
\end{definition}

Thus $\M$ has finite representation type if the number of isomorphism classes of indecomposable objects is finite; a typical example is a fusion category. If $\M$ is tame, then by definition almost all the indecomposable $\End_{\M}(P)^{\op}$-modules of any fixed dimension belong to a finite number of one-parameter families (with $P$ a projective generator in $\M$). For every $\End_{\M}(P)^{\op}$-module $X$, there are inequalities
$$\ell (X) \le \dim_k X \le m \ell (X)$$
where $\ell (X)$ denotes the length, and $m$ is the maximum dimension of the simple modules. Therefore, for the category $\M$ itself, tameness means the following: for any fixed length $d$, there is a finite collection
$$\{ M^1_{\alpha} \}_{\alpha \in k}, \dots, \{ M^{n_d}_{\alpha} \}_{\alpha \in k}$$
of one-parameter families of objects -- not all of which are necessarily of length $d$ -- such that almost all the objects of $\M$ of length $d$ belong to these families (up to isomorphism). Namely, just take all the one-parameter families of $\End_{\M}(P)^{\op}$-modules of dimension at most $dm$, and consider their images in $\M$ under the equivalence.
 
If $\M$ is wild, then for every finite dimensional $k$-algebra $B$, there exists an exact embedding $\mod B \longrightarrow \mod \End_{\M}(P)^{\op}$ that preserves indecomposability. Since $\mod \End_{\M}(P)^{\op}$ is equivalent to $\M$, we obtain such an embedding $\mod B \longrightarrow \M$ as well. In particular, this means that given any finite exact module category $\M'$ over any finite tensor $k$-category $\C'$, there exists such an embedding $\M' \longrightarrow \M$, since $\M'$ is equivalent to the category of modules over $\End_{\M'}(P')^{\op}$, for some (equivalently, every) projective generator $P' \in \M'$. Therefore, when the module category $\M$ is wild, then it contains every finite exact module category.

\begin{remark}\label{rem:hopf}
When $H$ is a finite dimensional Hopf algebra over $k$, then the representation type of the finite tensor category $\mod H$ is the same as the representation type of $H$ as a $k$-algebra, since $H$ is a projective generator in $\mod H$.
\end{remark}

Next, we recall a concept which will play a key role in the main result. As recalled in Remark \ref{rem:covers}(1), every object $M$ of $\M$ admits a projective cover, and therefore also a minimal projective resolution $(P_{\bu}, d_{\bu})$. This is unique up to isomorphism, and is a direct summand in every projective resolution of $M$. The $n$th \emph{syzygy} of $M$ is the image of $d_n$; we denote it by $\Omega_{\M}^n(M)$, and by $\Omega_{\C}^n(X)$ for an object $X \in \C$. The \emph{complexity} of $M$, denoted $\cx_{\M}(M)$, is now defined to be
$$\cx_{\M}(M) = \inf \{ c \in \mathbb{N} \cup \{ 0 \} \mid \exists b \in \mathbb{R} \text{ and } m\in \N \text{ with } \ell_{\M} ( P_n ) \le bn^{c-1} \text{ for all } n \ge m \}$$
where $\ell_{\M} (-)$ denotes the length of objects in $\M$. The following lemma shows that if for a fixed length, there are infinitely many non-isomorphic objects in $\M$ of complexity at least two, then $\M$ is wild. 
This is an application of results of Crawley-Boevey~\cite{CB} in this category setting.

\begin{lemma}\label{lem:wild}
Let $k$ be an algebraically closed field, $\left ( \C, \ot, \unit \right )$ a finite tensor category over $k$, and $\left ( \M, \ast \right )$ a finite exact $\C$-module category. Suppose there exists an integer $d$ for which the following holds: there exist infinitely many non-isomorphic indecomposable objects $M \in \M$ of length $d$ with $\cx_{\M} (M) \ge 2$. Then $\M$ is of wild representation type.
\end{lemma}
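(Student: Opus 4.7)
The plan is to argue by contradiction using the Drozd trichotomy recalled earlier together with Crawley-Boevey's~\cite{CB} structure theorem for tame algebras. Assume $\M$ is not of wild representation type. Fix a projective generator $P \in \M$, let $\Gamma_P = \End_{\M}(P)^{\op}$, and transfer through the equivalence $\Hom_{\M}(P,-)\colon \M \to \mod \Gamma_P$ of Remark~\ref{rem:covers}(3). Letting $m$ denote the maximum $k$-dimension among simple $\Gamma_P$-modules, the inequality $\ell(X)\le \dim_k X\le m\ell(X)$ from the preceding discussion turns the hypothesised infinite family of indecomposables of length $d$ in $\M$ into infinitely many pairwise non-isomorphic indecomposable $\Gamma_P$-modules of $k$-dimension at most $dm$.

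Since this rules out finite representation type, Drozd's theorem forces $\Gamma_P$ to be tame. Crawley-Boevey's structure result then asserts that for a tame finite dimensional algebra over an algebraically closed field, all but finitely many isomorphism classes of indecomposables in any fixed dimension satisfy $\tau N \cong N$, where $\tau$ is the Auslander-Reiten translate. Applying this to each of the finitely many dimensions $n\le dm$ and taking the union, infinitely many of our modules $N$ must be fixed by $\tau$.

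Next I would convert $\tau$-periodicity into $\Omega$-periodicity. Because $\M$ is exact over a finite tensor category, $\M$ is quasi-Frobenius, so $\Gamma_P$ is self-injective, and one has the standard identity $\tau \cong \Omega^2 \circ \nu$, with $\nu$ the Nakayama functor. Since $\nu$ permutes the finite set of isomorphism classes of indecomposable $\Gamma_P$-modules of each fixed $k$-dimension, some power $\nu^N$ fixes each such class, and $\tau N \cong N$ then gives $\Omega^{2N} N \cong N$. Periodicity of the minimal projective resolution of $N$ bounds the lengths of its terms, so $\cx_{\M}(N) \le 1$. Transporting back through the equivalence produces infinitely many indecomposable objects of $\M$ of length $d$ with complexity at most one, contradicting the hypothesis that every $M$ in our family has $\cx_{\M}(M)\ge 2$.

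The main obstacle in this outline is invoking the Crawley-Boevey result in precisely the form needed and then justifying the passage from $\tau$-periodicity to honest $\Omega$-periodicity in the self-injective setting; both are classical, but they are the essential external inputs. The remainder of the argument is routine bookkeeping relating length in $\M$ to $k$-dimension in $\mod\Gamma_P$, together with an appeal to the Drozd dichotomy already quoted in the paper.
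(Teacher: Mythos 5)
Your overall strategy is essentially the paper's: pass to $\Gamma = \End_{\M}(P)^{\op}$, use the length/dimension comparison to produce infinitely many non-isomorphic indecomposables of some fixed $k$-dimension with complexity at least two, invoke Crawley-Boevey's result on $\tau$-invariance of almost all indecomposables of a fixed dimension over a tame algebra, and use $\tau \cong \Omega^2 \circ \nu$ to turn $\tau$-invariance into a complexity bound. The paper runs this directly (each such module satisfies $\tau(N) \nsimeq N$, and then \cite[Theorem D]{CB} gives wildness), whereas you run the contrapositive; that difference is purely organizational.

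There is, however, one step that fails as written. You assert that $\nu$ permutes ``the finite set of isomorphism classes of indecomposable $\Gamma_P$-modules of each fixed $k$-dimension'' and deduce that some power $\nu^N$ fixes each class, whence $\Omega^{2N}(N) \cong N$. But that set is infinite in exactly the situation at hand --- by hypothesis there are infinitely many isomorphism classes of indecomposables in the relevant dimension, which is the whole point of the lemma --- so no such finiteness argument is available, and in general no power of $\nu$ need fix the class of $N$. Fortunately the detour through honest $\Omega$-periodicity is unnecessary: from $\tau(N) \cong N$ one gets $\Omega_{\Gamma}^2(N) \cong {_{\nu}N}$ (up to the sign of the twist), hence $\Omega_{\Gamma}^{2n}(N) \cong {_{\nu^n}N}$ for all $n$, and since twisting by an algebra automorphism preserves dimension, the terms of the minimal projective resolution of $N$ are bounded and $\cx_{\Gamma}(N) \le 1$ follows at once. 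To have $\nu$ realized as twisting by an algebra automorphism (so that dimensions are indeed preserved), you should, as the paper does, replace $\Gamma_P$ by the basic algebra $\End_{\M}(P_1 \oplus \cdots \oplus P_n)^{\op}$, which is selfinjective and basic, hence Frobenius; this is harmless by Morita invariance of representation type. With these repairs your argument is correct and coincides with the paper's.
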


\begin{proof}
Let $P_1, \dots, P_n$ be a complete set of representatives of the isomorphism classes of indecomposable projective objects in $\M$, and denote the finite dimensional $k$-algebra $\End_{\M}(P_1 \oplus \cdots \oplus P_n)^{\op}$ by $\Gamma$. By Remark \ref{rem:covers}(3), the category $\mod \Gamma$ is equivalent to $\M$, hence $\Gamma$ is selfinjective since $\M$ is quasi-Frobenius. Moreover, as $P_i \nsimeq P_j$ when $i \neq j$, the algebra $\Gamma$ is basic, and therefore a Frobenius algebra by \cite[Proposition IV.3.9]{SY}. Denote by $\nu$ its Nakayama automorphism, which is unique up to composition with an inner automorphism. By \cite[Proposition IV.3.13 and Theorem IV.8.5]{SY}, the Auslander-Reiten translate $\tau$ is naturally isomorphic to $\Omega_{\Gamma}^2( {_{\nu}(-)})$ as functors on the stable module category $\stmod \Gamma$, where the latter is the functor that maps a module $N$ to $\Omega_{\Gamma}^2( {_{\nu}N} )$. Here ${_{\nu}N}$ is the twisted $\Gamma$-module that equals $N$ as a $k$-vector space, but with module operation $x \cdot m = \nu (x) m$ for $x \in \Gamma$ and $m \in N$.

Since $\M$ is equivalent to $\mod \Gamma$, the assumptions in the statement imply that there exists an integer $d$ for which the following holds: there exist infinitely many non-isomorphic indecomposable $\Gamma$-modules $N$ of length $d$, with $\cx_{\Gamma} (N) \ge 2$. Let $m$ be the maximum of the dimensions (as $k$-vector spaces) of the simple $\Gamma$-modules. Then $\dim_k N \le m \ell_{\Gamma} (N)$ for every $N \in \mod \Gamma$, so there must exist an integer $d'$ for which the following holds: there exist infinitely many non-isomorphic indecomposable $\Gamma$-modules $N$ with $\dim_k N = d'$, and with $\cx_{\Gamma} (N) \ge 2$. For each such module $N$, necessarily $\tau (N) \nsimeq N$. If not, then $\Omega_{\Gamma}^2( {_{\nu}N} )$ and $N$ are isomorphic in $\stmod \Gamma$, and since projective covers commute with automorphism twistings of any kind, it follows that $\Omega_{\Gamma}^2(N) \simeq {_{\nu}N}$. Then by induction  $\Omega_{\Gamma}^{2n}(N) \simeq {_{\nu^n}N}$ for every $n \ge 1$, and consequently $\cx_{\Gamma} (N) \le 1$, a contradiction. Therefore $\tau (N) \nsimeq N$. By~\cite[Theorem D]{CB}, $\Gamma$ is of wild representation type.
\end{proof}

We end this section with a rather long example illustrating the lemma. For finite dimensional Hopf algebras other than group algebras of finite groups, it is not generally understood which ones are of tame representation type. Even for restricted enveloping algebras of Lie algebras, there is at present no general classification; although for example \cite{F2} and \cite{FS} give classifications in many settings, it is usually assumed that the characteristic is not two. Of the known cases, most tame Hopf algebras are special biserial, for example blocks of restricted enveloping algebras of $\mathfrak{sl}(2)$, or Drinfeld doubles of Taft algebras.

\begin{example}\label{example:local} 
Assume that the characteristic of the (algebraically closed) field $k$ is $p \ge 2$, and let $A$ be a finite dimensional local $k$-algebra with two independent generators $x$ and $y$. That is, $x$ and $y$ are in the Jacobson radical $\rad(A)$ and are linearly independent modulo $\rad(A)^2$. There is a one-parameter family $\{ M_{\lambda} \mid \lambda \in k \}$ of two dimensional $A$-modules, with $x$ and $y$ acting on $M_{\lambda}$ via the two matrices 
$$\left ( \begin{array}{cc}
0 & \lambda \\ 0 & 0 
\end{array} \right ) \hspace{3mm} \text{and} \hspace{3mm} \left ( \begin{array}{cc}
0 & 1 \\ 0 & 0 
\end{array} \right )$$
respectively. These modules are indecomposable since, for example, the latter is an indecomposable Jordan block. One also checks that $M_{\lambda} \nsimeq M_{\mu}$ when $\lambda \neq \mu$. Consequently, if $A$ is a Frobenius algebra, and $\cx_A ( M_{\lambda} ) \ge 2$ for each (or infinitely many) $\lambda \in k$, then $A$ is of wild representation type, as shown in the proof of Lemma \ref{lem:wild}.

Suppose, for example, that $A = kG$, the group algebra of a finite group $G$. If $\cx_A ( M_{\lambda} ) =1$, then $M_{\lambda}$ must be a periodic module; see \cite[Theorem 5.10.4]{B}. Since $M_{\lambda}$ is then also periodic when restricted to a subgroup, in particular an elementary abelian subgroup of rank one, the dimension of $M_{\lambda}$ must be divisible by $p$. This cannot happen when $p \ge 3$, and so in this case $\cx_A ( M_{\lambda} ) \ge 2$ for all $\lambda \in k$. The algebra $A$ is therefore wild when $p \ge 3$. More generally, for such $p$, a group algebra $kG$ is wild whenever $G$ has a factor group isomorphic to $C_p \times C_p$. Namely, the group algebra of the latter is isomorphic to $k[x,y]/(x^p, y^p)$, and an algebra having a wild factor algebra is itself wild.

Assume now that $p=2$. The modules constructed above may then have complexity one. For example, consider the algebra 
$$k \langle x,y \rangle / ( x^2, y^2, xyxy-yxyx )$$
which is isomorphic to the group algebra of the dihedral group of order $8$. This algebra is tame (see~\cite[Chapter III]{Er}), and in this case, the modules $M_{\lambda}$ are periodic, of period two (see~\cite[Theorem in Appendix]{Ri}).

For our local algebra $A$ and $p=2$, we now construct a new one-parameter family $\{ N_{\lambda} \mid \lambda \in k \}$ of $A$-modules, this time of dimension three. Note first that both the generators $x$ and $y$ are nilpotent; let us restrict ourselves to the case $y^2=0$. If also $x^2 =0$, then $A$ is special biserial, and therefore tame; see~\cite[Corollary 2.4]{WW}. Assume therefore that $x$ has nilpotency index $a \ge 3$, and also that $x$ and $y$ commute. Now define $N_{\lambda}$ by letting $x$ and $y$ act via the two matrices 
$$\left ( \begin{array}{ccc}
0 & \lambda & 0 \\ 0 & 0 & \lambda \\ 0 & 0 & 0 
\end{array} \right ) \hspace{3mm} \text{and} \hspace{3mm} \left ( \begin{array}{ccc}
0 & 0 &  1 \\ 0 & 0 & 0 \\ 0 & 0 & 0 
\end{array} \right )$$
respectively. When restricted to the subalgebra generated by $x$, this module is indecomposable, hence it is also indecomposable over $A$. Moreover, as for the modules $M_{\lambda}$, it can be shown that $N_{\lambda} \nsimeq N_{\mu}$ when $\lambda \neq \mu$. Finally, it can be shown that, since $p=2$, these modules are not periodic, and that our assumptions on the algebra $A$ then forces them to be of complexity at least two. The algebra is therefore wild. Note that $A$ is isomorphic to a factor algebra of the group algebra of any abelian $2$-group of rank $2$ other than the group of order $4$, and hence such group algebras are also wild.

As a concrete application, let us look at connected Hopf algebras of dimension $p^3$. An almost complete classification of these algebras was given in \cite{NWW}, the exception being those for which the primitive space is a two dimensional abelian restricted Lie algebra. We shall determine the representation type of those algebras from the classification that are isomorphic to enveloping algebras of restricted Lie algebras; they are the ones labelled by C5, C6 and C15 in \cite{NWW}, see \cite[Proposition 6.1]{ESW}. In each case, the Lie algebra $L$ is three dimensional, with a basis $\{ x,y,z \}$ satisfying $[x,y] =z$.

For the algebra C5, the element $z$ is central in $L$, and the $p$-map is trivial. Then $A = U^{[p]}(L)$ has the presentation
$$k \langle x,y \rangle / ( x^p, y^p, (xy-yx)^p, xy-yx \text{ is central} )$$
Let $\mathfrak{a}$ be the ideal of $A$ generated by the element $xy-yx$. Then $A / \mathfrak{a}$ is isomorphic to $k[x,y]/(x^p,y^p)$, which again is isomorphic to the group algebra of $C_p \times C_p$. When $p \ge 3$, this algebra is wild, and then so is $A$. When $p=2$, then $A$ is isomorphic to the group algebra of the dihedral group of order $8$, and therefore tame.

For the algebra C6, the commutator $[x,y]$ is again central. In this case, the algebra $A = U^{[p]}(L)$ has the presentation
$$k \langle x,y \rangle / ( y^p, x^p-(xy-yx), (xy-yx)^p, xy-yx \text{ is central} )$$
As in the previous case, with $\mathfrak{a}$ the ideal of $A$ generated by the element $xy-yx$, the algebra $A / \mathfrak{a}$ is isomorphic to $k[x,y]/(x^p,y^p)$. Thus $A$ is wild when $p \ge 3$. When $p=2$, then by changing the generators from $x,y$ to $x+y,y$ we see that also here $A$ is isomorphic to the group algebra of the dihedral group of order $8$, and therefore tame.

Finally, for the algebra C15, the Lie algebra is $\mathfrak{sl}(2)$, and its restricted enveloping algebra is tame. It has one simple block, and all other blocks are Morita equivalent to Kronecker algebras; see for example \cite{P} or \cite{H}.
\end{example}

\section{The main result}\label{sec:main}

As in the previous section, we fix an algebraically closed field $k$. Furthermore, let us fix a finite tensor category $\left ( \C, \ot, \unit \right )$ over $k$, together with a finite exact $\C$-module category $\left ( \M, \ast \right )$.

For objects $M,N \in \M$, we shall denote the graded $k$-vector space $\oplus_{n=0}^{\infty} \Ext_{\M}^n(M,N)$ by $\Ext_{\M}^*(M,N)$, and similarly for objects in $\C$. The \emph{cohomology ring} of $\C$ is $\Ext_{\C}^*( \unit, \unit )$, which we denote by $\Coh^* ( \C )$. Since the module product is exact in the first variable, there is a homomorphism
\begin{center}
\begin{tikzpicture}
\diagram{d}{3em}{3em}{
\Coh^* ( \C ) & \Ext_{\M}^*(M,M) \\
 };
\path[->, font = \scriptsize, auto]
(d-1-1) edge node{$\varphi_M$} (d-1-2);
\end{tikzpicture}
\end{center}
of graded $k$-algebras, induced by $- \ast M$. This makes $\Ext_{\M}^*(M,N)$ into a right $\Coh^* ( \C )$-module, via $\varphi_M$ followed by Yoneda composition. Similarly, it becomes a left $\Coh^* ( \C )$-module via $\varphi_N$. By \cite[Corollary 2.3]{BPW2}, the left and the right module structures coincide up to a sign for homogeneous elements. In particular, by taking $\M = \C$ and $M = N = \unit$, we see that the cohomology ring $\Coh^* ( \C )$ is graded-commutative; this was proved in a more general setting in \cite[Theorem 1.7]{SA}. It was conjectured by Etingof and Ostrik in \cite{EO} that $\Coh^* ( \C )$ is always finitely generated as a $k$-algebra, and that $\Ext_{\C}^*(X,X)$ is a finitely generated $\Coh^* ( \C )$-module for every object $X \in \C$. This conjecture is still open, and so we therefore make the following definition.

\begin{definition}
The finite tensor category $\left ( \C, \ot, \unit \right )$ satisfies the \emph{finiteness condition} \textbf{Fg} if $\Coh^*( \C )$ is finitely generated, and $\Ext_{\C}^*(X,X)$ is a finitely generated $\Coh^*( \C )$-module for every object $X \in \C$.
\end{definition}

Note that this finiteness condition only involves the objects of $\C$. A priori, then, it has nothing to do with the module category $\M$ and the cohomology of its objects. However, by \cite[Proposition 3.5]{NP}, when \textbf{Fg} holds for $\C$, then $\M$ satisfies a finiteness property on cohomology as well: for every object $M \in \M$, the $\Coh^*( \C )$-module $\Ext_{\M}^*(M,M)$ is finitely generated as a module.

When \textbf{Fg} holds for $\C$, then we obtain a quite powerful theory of support varieties for the objects of $\M$, as explored in \cite{BPW1} and \cite{BPW2}. In the proof of the main result below, we shall make use of these varieties, so we recall here some definitions.

 \begin{definition}
(1) We define
$$\Ho ( \C ) = \left \{ 
\begin{array}{ll}
\Coh^*( \C ) & \text{if the characteristic of $k$ is two,} \\
\Coh^{2*}( \C ) & \text{if not.}
\end{array} 
\right.$$
Furthermore, we define $\m_0$ to be $\Coh^{+} (\C)$, that is, the ideal in $\Ho ( \C )$ generated by the homogeneous elements of positive degrees. 

(2) For an ideal $\az \subseteq \Ho ( \C )$, we denote by $Z ( \az )$ the set of all maximal ideals in $\Ho ( \C )$ containing $\az$.

(3) For an object $M \in \M$, we define $\az_M$ to be the annihilator ideal of $\Ext_{\M}^*(M,M)$ in $\Ho ( \C )$, and the \emph{support variety} $\VM (M)$ to be $\{ \m_0 \} \cup Z( \az_M )$.
\end{definition}

Some comments are in order here. First, note that $\Ho ( \C )$ is commutative in the ordinary sense, since $\Coh^*( \C )$ is graded-commutative. The latter also implies that when the characteristic of $k$ is not $2$, then the homogeneous elements of odd degrees square to zero. So when passing from $\Coh^*( \C )$ to $\Ho ( \C )$, we gain ordinary commutativity and get rid of only nilpotent elements. We do not lose anything in relation to the support variety theory; note that the definition of the finiteness condition \textbf{Fg} is equivalent to the version where we replace $\Coh^*( \C )$ by $\Ho ( \C )$.

Next, note that since $k$ is algebraically closed, the degree zero part of $\Ho ( \C )$, which is $\Hom_{\C} ( \unit, \unit )$, is just $k$, because the unit object is simple. Therefore $\m_0$ is a maximal ideal in $\Ho ( \C )$, and it is the unique graded maximal ideal. Now given any object $M \in \M$, the annihilator ideal $\az_M$ is a graded ideal of $\Ho ( \C )$, and is therefore contained in $\m_0$ as long as $\Ext_{\M}^*(M,M)$ is nonzero. The latter is the case if and only if $M$ itself is nonzero, and so we see that the explicit inclusion of $\m_0$ in the definition of the support variety $\VM (M)$ only matters when $M =0$.

Finally, note that when the assumption \textbf{Fg} holds, then the complexity of the objects of $\M$ is an invariant with several nice properties, as shown in \cite[Theorem 2.7]{BPW2}. For example, the complexity of an object $M \in \M$ is finite and equal to the dimension of the support variety $\VM(M)$, which by definition is $ \Kdim \Ho ( \C ) / \az_M$ (here $\Kdim$ denotes the Krull dimension). In particular, $\cx_{\M}(M)$ is at most $ \Kdim \Ho ( \C )$.

Now to the main result. It states that if \textbf{Fg} holds and $\M$ contains an object of complexity three or more, then $\M$ is of wild representation type. Compare with~\cite[Theorem 4.1]{BS} for selfinjective algebras: If $\Gamma$ is a finite dimensional selfinjective algebra over~$k$ that has a module $M$ with complexity at least three, and $\Ext^*_{\Gamma}(M,M)$ contains a finitely generated subalgebra over which it is a finitely generated module, then $\Gamma$ has wild representation type. Compare also with~\cite[Theorem 2.1]{FW2}, an analogous result for categories of modules of selfinjective algebras satisfying instead a finiteness condition on Hochschild cohomology.

\begin{theorem}\label{thm:main}
Let $k$ be an algebraically closed field, $\left ( \C, \ot, \unit \right )$ a finite tensor category over $k$, and $\left ( \M, \ast \right )$ a finite exact $\C$-module category. If \emph{\textbf{Fg}} holds, and there exists an object $M \in \M$ with $\cx_{\M}(M) \ge 3$, then the category $\M$ is of wild representation type.
\end{theorem}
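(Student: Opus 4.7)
The plan is to apply Lemma~\ref{lem:wild}: I will exhibit, for some fixed integer $d$, infinitely many pairwise non-isomorphic indecomposable objects of $\M$ of length $d$ and complexity at least two. The family comes from acting on $M$ by a one-parameter pencil of Carlson-type ``$L_\zeta$'' objects built from classes in the cohomology ring of $\C$.

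For each homogeneous $\zeta \in \Ho(\C)$ of positive degree $n$, I realise $\zeta$ as a morphism $\Omega_\C^n(\unit) \to \unit$ in $\C$ and let $L_\zeta$ be its kernel. Tensoring the resulting short exact sequence
$$0 \longrightarrow L_\zeta \longrightarrow \Omega_\C^n(\unit) \longrightarrow \unit \longrightarrow 0$$
on the right with any $X \in \M$ and processing it through the Yoneda action of $\Ho(\C)$ on $\Ext_\M^*(X,X)$ (which is finitely generated thanks to \textbf{Fg}) yields the central support-variety identity
$$\VM(L_\zeta \ast X) \;=\; \VM(X) \cap Z(\zeta).$$

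Set $c := \cx_\M(M) \ge 3$, so that $\Ho(\C)/\az_M$ has Krull dimension $c$. By graded Noether normalisation, and after raising generators to suitable powers to equalise their degrees, I pick homogeneous $\eta_0, \eta_1 \in \Ho(\C)$ of a common degree $n$ whose images extend to a homogeneous system of parameters in $\Ho(\C)/\az_M$; in particular
$$\dim \bigl(\VM(M) \cap Z(\eta_0) \cap Z(\eta_1)\bigr) \;=\; c - 2.$$
Form the pencil $\zeta_\lambda := \eta_0 + \lambda \eta_1$ for $\lambda \in k$ and set $N_\lambda := L_{\zeta_\lambda} \ast M$. Applying the exact functor $- \ast M$ to the defining sequence of $L_{\zeta_\lambda}$ embeds $N_\lambda$ as a subobject of $\Omega_\C^n(\unit) \ast M$, so $\ell_\M(N_\lambda)$ is bounded uniformly in $\lambda$. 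By the support-variety identity, $\dim \VM(N_\lambda) = c - 1 \ge 2$ for all but finitely many $\lambda$, and hence some indecomposable summand $N'_\lambda \subseteq N_\lambda$ satisfies $\cx_\M(N'_\lambda) = c - 1$. A pigeonhole argument on the uniformly bounded lengths $\ell_\M(N'_\lambda)$ produces an integer $d$ and an infinite subset $S \subseteq k$ with $\ell_\M(N'_\lambda) = d$ for every $\lambda \in S$.

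The main obstacle is showing that the map $\lambda \mapsto [N'_\lambda]$ is injective on $S$. If $N'_\lambda \simeq N'_\mu$ for some distinct $\lambda, \mu \in S$, then
$$\VM(N'_\lambda) \;\subseteq\; Z(\zeta_\lambda) \cap Z(\zeta_\mu) \cap \VM(M) \;=\; Z(\eta_0) \cap Z(\eta_1) \cap \VM(M),$$
because any two distinct members of the pencil span the ideal $(\eta_0, \eta_1)$. The right-hand side has dimension at most $c - 2$, contradicting $\dim \VM(N'_\lambda) = c - 1$. Hence the $N'_\lambda$ for $\lambda \in S$ are pairwise non-isomorphic indecomposables of length $d$ with complexity at least two, and Lemma~\ref{lem:wild} concludes that $\M$ is of wild representation type.
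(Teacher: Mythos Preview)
Your proof is correct and follows the same overall strategy as the paper: produce a one-parameter family by acting with a pencil of Carlson objects $L_{\zeta_\lambda}$, invoke the identity $\VM(L_\zeta\ast M)=\VM(M)\cap Z(\zeta)$, bound lengths uniformly, pigeonhole, and apply Lemma~\ref{lem:wild}. The commutative-algebra core is where you diverge.

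The paper first reduces to $\cx_{\M}(M)=3$ and then to the case where $\VM(M)$ is irreducible (by tensoring $M$ with several $L_{\zeta_i}$ to pick out one minimal prime $\p$ over $\az_M$). It then applies Noether normalisation to the \emph{domain} $\Ho(\C)/\p$ and uses Krull's Principal Ideal Theorem---which needs an integral domain---to see that cutting by $\eta_1^s+\alpha\eta_2^t$ drops dimension by exactly one. For the non-isomorphism step it invokes going-up and going-down for the integral extension $k[w_1,w_2,w_3]\subseteq\Ho(\C)/\p$ to build a too-long chain of primes. You bypass all of this: working directly over $\Ho(\C)/\az_M$ (no irreducibility assumption) and choosing $\eta_0,\eta_1$ as the first two members of a homogeneous system of parameters, you get $\dim\bigl(\VM(M)\cap Z(\zeta_\lambda)\bigr)=c-1$ for every $\lambda$ simply because $(\zeta_\lambda,\eta_1,\dots,\eta_{c-1})$ generates the same ideal as the original sop, and $\dim\bigl(\VM(M)\cap Z(\eta_0)\cap Z(\eta_1)\bigr)=c-2$ for the same reason. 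The non-isomorphism argument then becomes a one-line dimension comparison, since two distinct pencil members span $(\eta_0,\eta_1)$. This is genuinely cleaner than the paper's route; the paper's reductions buy the use of PIT and the going-down theorem, but your sop argument shows those were not needed. One cosmetic point: your ``for all but finitely many $\lambda$'' can be strengthened to ``for all $\lambda$'', and you might state explicitly that $\zeta_\lambda\neq 0$ in $\Ho(\C)$ (so the map $\Omega_{\C}^n(\unit)\to\unit$ is an epimorphism), which follows since the images of $\eta_0,\eta_1$ in $\Ho(\C)/\az_M$ are algebraically, hence $k$-linearly, independent.
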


\begin{proof}
Suppose {\textbf{Fg}} holds and there is an object of complexity at least 3 in $\M$. By the reduction of complexity result~\cite[Theorem 2.7(6)]{BPW2}, there exists an object $M \in \M$ with $\cx_{\M}(M) = 3$. Furthermore, as explained in the paragraph following \cite[Corollary 3.2]{BPW2}, the set of minimal primes lying over the annihilator ideal $\az_M$ in $\Ho ( \C )$ is finite, and they are all graded; let us denote them by $\rz_1, \dots, \rz_n$. Then $\VM (M) = Z( \rz_1 ) \cup \cdots \cup Z( \rz_n )$, and since $\Kdim \Ho ( \C ) / \az_M = 3$, one of these primes, say $\rz_1$, is of coheight three. 

Since $\rz_1$ is graded, it is of the form $( \zeta_1, \dots, \zeta_t )$ for some homogeneous nonzero elements $\zeta_i \in \Ho ( \C )$, say of positive degrees $n_i$. Each $\zeta_i$ corresponds to a nonzero morphism $\Omega_{\C}^{n_i} ( \unit ) \longrightarrow \unit$, which must necessarily be an epimorphism since $\unit$ is a simple object. Denote the kernel of this morphism by $L_{\zeta_i}$. By \cite[Theorem 2.7(3)]{BPW2}, there are equalities
\begin{eqnarray*}
\VM \left ( (L_{\zeta_1} \ot \cdots \ot L_{\zeta_t}) \ast M \right ) & = & Z( \zeta_1 ) \cap \cdots \cap Z( \zeta_t ) \cap \VM (M) \\
& = & Z ( \zeta_1, \dots, \zeta_t )  \cap \VM (M) \\
& = & Z( \rz_1 )  \cap \VM (M) \\
& = & Z( \rz_1 )
\end{eqnarray*}
and so we see that the object $(L_{\zeta_1} \ot \cdots \ot L_{\zeta_t}) \ast M$ is also of complexity $3$. By replacing $M$ with this object, we may without loss of generality suppose that $\VM(M)$ is irreducible, that is, there is a unique minimal (graded) prime $\p$ lying over $\az_M$ in $\Ho ( \C )$.

Consider the ring $\Ho ( \C ) / \p$. Since its Krull dimension is three, the Noether normalization lemma applies: There is a polynomial ring $S = k[w_1,w_2,w_3] \subseteq \Ho ( \C ) / \p$ over which $\Ho ( \C ) / \p$ is a finitely generated module. Moreover, we may suppose that the elements $w_1,w_2,w_3$ are homogeneous, and have positive degrees. The ideal $\p$ is contained in the ideal $\m_0$ generated by all homogeneous elements of positive degree, and so the degree zero part of $\Ho ( \C ) / \p$ is $k$. Now from standard commutative algebra (see, for example, \cite[Chapter 5]{AM}), we obtain some important properties. First of all, since the ring extension is integral, the going-up property holds, and the contraction map $\Spec \Ho ( \C ) / \p \longrightarrow \Spec S$ is surjective. Moreover, since both $S$ and $\Ho ( \C ) / \p$ are integral domains, and $S$ is integrally closed in its field of fractions (it is a unique factorization domain), the going-down property holds as well.

Let $\q_1$ be a prime ideal in $\Ho ( \C ) / \p$ with $\q_1 \cap S = (w_1)$. By the going-up property, there is a chain of primes $\q_1 \subset \q_2 \subset \q_3$ in $\Ho ( \C ) / \p$ with $\q_2 \cap S = (w_1,w_2)$ and $\q_3 \cap S =  (w_1,w_2,w_3)$. Each $\q_i$ is of the form $\p_i / \p$ for a prime ideal $\p_i \subseteq \Ho ( \C )$, and so we obtain a chain 
$$\p \subset \p_1 \subset \p_2 \subset \p_3$$
in $\Spec \Ho ( \C )$. Now, as elements of $\Ho ( \C ) / \p$, write each of $w_1$ and $w_2$ as $w_i = \eta_i + \p$ for some homogeneous element $\eta_i \in \Ho ( \C )$ of positive degree. Then since $w_1 \in \q_1 \setminus \{ 0 \}$ and $w_2 \in \q_2 \setminus \q_1$, we see that $\eta_1 \in \p_1 \setminus \p$ and $\eta_2 \in \p_2 \setminus \p_1$. Choose positive integers $s,t$ such that the homogeneous elements $\eta_1^s, \eta_2^t$ are of the same degree, and let $\alpha \in k$ be any element. Finally, set $\eta = \eta_1^s + \alpha\eta_2^t$ and consider the object 
$$M_{\alpha} = L_{\eta} \ast M$$
where $L_{\eta} \in \C$ is defined in the same way as the objects $L_{\zeta_i}$ from the beginning of the proof. We shall prove that $\cx_{\M}(M_{\alpha}) =2$ for every $\alpha \in k$. 

First note that there are equalities
\begin{equation*}\label{eqn:radicals}
Z ( \az_{M_{\alpha}} ) = \VM ( M_{\alpha} ) = Z ( \eta ) \cap \VM(M) = Z ( \eta ) \cap Z ( \az_M ) = Z \left ( \az_M + ( \eta ) \right ) \tag{$\dagger$} ,
\end{equation*}
where the second equality is by \cite[Theorem 2.7(3)]{BPW2}. By Hilbert's Nullstellensatz (see e.g.~\cite[Theorem 1.6]{E} or~\cite[Theorem 25]{M}), we see that $\sqrt{\az_{M_{\alpha}}} = \sqrt{\az_M + ( \eta )}$, where $\sqrt{\az}$ denotes the radical of an ideal $\az \subseteq \Ho ( \C )$. Therefore the complexity of $M_{\alpha}$, which equals $\Kdim \Ho ( \C ) / \az_{M_{\alpha}}$, is the same as $\Kdim \Ho ( \C ) / ( \az_M + ( \eta ) )$. Furthermore, as $\p$ is the unique minimal prime in $\Ho ( \C )$ lying over $\az_M$, the latter equals $\Kdim \Ho ( \C ) / ( \p + ( \eta ) )$. As this last ring is isomorphic to $\left ( \Ho ( \C ) / \p \right ) / ( w_1^s + \alpha w_2^t )$, it suffices to show that the Krull dimension of $\left ( \Ho ( \C ) / \p \right ) / ( w_1^s + \alpha w_2^t )$ is two. For this, note that since $\Ho ( \C ) / \p$ is an integral domain, and the element $w_1^s + \alpha w_2^t$ is nonzero (the elements $w_1$ and $w_2$ are algebraically independent), Krull's Principal Ideal Theorem (see~\cite[\S 10]{E} or~\cite[Theorem 18]{M}) implies that every minimal prime in $\Ho ( \C ) / \p$ lying over $w_1^s + \alpha w_2^t$ has height one. Combined with the fact that every maximal ideal of $\Ho ( \C ) / \p$ is of height three (see \cite[Corollary 13.4]{E}), we see that the Krull dimension of $\left ( \Ho ( \C ) / \p \right ) / ( w_1^s + \alpha w_2^t )$ must be two. This shows that $\cx_{\M} ( M_{\alpha} ) =2$.

In general, the complexity of a direct sum of objects equals the maximum of the complexities of the summands. Therefore, the indecomposable summands of $M_{\alpha}$ are of complexity at most two, and at least one summand is of complexity two. For each $\alpha \in k$, choose an indecomposable summand $N_{\alpha}$ of $M_{\alpha}$, with $\cx_{\M} ( N_{\alpha} ) =2$. We shall prove that $N_{\alpha} \nsimeq N_{\beta}$ when $\alpha \neq \beta$.

Suppose that $N_{\alpha}$ is isomorphic to $N_{\beta}$, with $\alpha \neq \beta$. Then 
$$Z ( \az_{N_{\alpha}} ) = \VM ( N_{\alpha} ) = \VM ( N_{\beta} ) = Z ( \az_{N_{\beta}} ) , $$ 
giving $\sqrt{\az_{N_{\alpha}}} = \sqrt{\az_{N_{\beta}}}$ as a consequence of Hilbert's Nullstellensatz. Choose a chain $\p_1' \subset \p_2' \subset \p_3'$ in $\Spec \Ho ( \C )$ with $\az_{N_{\alpha}} \subseteq \p_1'$; this is possible since $\Kdim  \Ho ( \C ) / \az_{N_{\alpha}} = 2$. Note that $\az_{N_{\beta}}$ is also contained in $\p_1'$ since $\sqrt{\az_{N_{\alpha}}} = \sqrt{\az_{N_{\beta}}}$. Now since $N_{\alpha}$ is a direct summand of $M_{\alpha}$, the $\Ho ( \C )$-module $\Ext_{\M}^*(N_{\alpha},N_{\alpha})$ is a direct summand of $\Ext_{\M}^*(M_{\alpha},M_{\alpha})$, giving $\az_{M_{\alpha}} \subseteq \az_{N_{\alpha}}$. From (\ref{eqn:radicals}) above and Hilbert's Nullstellensatz, we then obtain 
$$\sqrt{\az_M + ( \eta_1^s + \alpha \eta_2^t )} = \sqrt{\az_{M_{\alpha}}} \subseteq \sqrt{\az_{N_{\alpha}}}$$
and in particular $\eta_1^s + \alpha \eta_2^t \in \sqrt{\az_{N_{\alpha}}}$. Similarly, since $\sqrt{\az_{N_{\alpha}}} = \sqrt{\az_{N_{\beta}}}$, we see that $\eta_1^s + \beta \eta_2^t \in \sqrt{\az_{N_{\alpha}}}$. Consequently, both $\eta_1^s + \alpha \eta_2^t$ and $\eta_1^s + \beta \eta_2^t$ are elements of $\p_1'$. As $\alpha \neq \beta$, both $\eta_1^s$ and $\eta_2^t$, and therefore also the elements $\eta_1$ and $\eta_2$, must belong to $\p_1'$.

From the inclusions $\az_{M_{\alpha}} \subseteq \az_{N_{\alpha}} \subseteq \p_1'$, and the fact that the prime ideal $\p$ is the unique minimal prime containing $\az_{M_{\alpha}}$, we see that $\p \subseteq \p_1'$. The chain $\p_1' \subset \p_2' \subset \p_3'$ therefore corresponds to a chain $\q_1' \subset \q_2' \subset \q_3'$ in $\Spec \left ( \Ho ( \C ) / \p \right )$, with $w_1,w_2 \in \q_1'$ since $\eta_1, \eta_2 \in \p_1'$. Then $\q_1' \cap S$ is a prime ideal in $S$ containing both $w_1$ and $w_2$, and so there is a chain $0  \subset (w_1) \subset ( \q_1' \cap S)$ in $\Spec S$. By the going-down property, there is a corresponding chain $0 \subset \q_0' \subset q_1'$ in $\Spec \left ( \Ho ( \C ) / \p \right )$, with $\q_0' \cap S = (w_1)$. This gives a chain
$$0 \subset \q_0' \subset q_1' \subset \q_2' \subset \q_3'$$ 
in $\Spec \left ( \Ho ( \C ) / \p \right )$, so that $\Kdim \Ho ( \C ) / \p \ge 4$. However, the Krull dimension of $\Ho ( \C ) / \p$ is three, and so we have reached a contradiction. This shows that $N_{\alpha} \nsimeq N_{\beta}$ when $\alpha \neq \beta$.

Finally, consider the length of each object $N_{\alpha}$. First of all, since this object is a summand of $M_{\alpha}$, we obtain the inequality $\ell_{\M} ( N_{\alpha} ) \le \ell_{\M} ( M_{\alpha} )$. Now recall that $M_{\alpha} = L_{\eta_1^s + \alpha \eta_2^t} \ast M$, where $L_{\eta_1^s + \alpha \eta_2^t}$ is the kernel of an epimorphism $\Omega_{\C}^n ( \unit ) \longrightarrow \unit$ in $\C$, with $n$ the degree of $\eta_1^s + \alpha \eta_2^t$. The module product $- \ast M$ is exact, so from the exact sequence
\begin{center}
\begin{tikzpicture}
\diagram{d}{3em}{3em}{
0 & L_{\eta_1^s + \alpha \eta_2^t} & \Omega_{\C}^n ( \unit ) & \unit & 0 \\
 };
\path[->, font = \scriptsize, auto]
(d-1-1) edge (d-1-2)
(d-1-2) edge (d-1-3)
(d-1-3) edge (d-1-4)
(d-1-4) edge (d-1-5);
\end{tikzpicture}
\end{center}
in $\C$, we obtain an exact sequence
\begin{center}
\begin{tikzpicture}
\diagram{d}{3em}{3em}{
0 & M_{\alpha} & \Omega_{\C}^n ( \unit ) \ast M & M & 0 \\
 };
\path[->, font = \scriptsize, auto]
(d-1-1) edge (d-1-2)
(d-1-2) edge (d-1-3)
(d-1-3) edge (d-1-4)
(d-1-4) edge (d-1-5);
\end{tikzpicture}
\end{center}
in $\M$. Consequently, we see that $\ell_{\M} ( N_{\alpha} ) \le \ell_{\M} ( \Omega_{\C}^n ( \unit ) \ast M )$ for every $\alpha \in k$. As $k$ is infinite, this implies that there exists an integer $d$ with the property that $\ell_{\M} ( N_{\alpha} ) = d$ for infinitely many elements $\alpha \in k$. Therefore, by Lemma \ref{lem:wild}, the module category $\M$ is of wild representation type.
\end{proof}

When \textbf{Fg} holds, by \cite[Theorem 4.1]{BPW1}, the complexity of the unit object $\unit \in \C$ equals the Krull dimension of $\Ho ( \C )$, and $\cx_{\C} (X) \le \cx_{\C} ( \unit )$ for every object $X \in \C$. Consequently, we see that $\Kdim \Ho ( \C ) \ge 3$ if, and only if, there exists an object $X \in \C$ with $\cx_{\C} (X) \ge 3$. We therefore obtain the following corollary of Theorem~\ref{thm:main}.

\begin{corollary}\label{cor:main}
Let $k$ be an algebraically closed field and $\left ( \C, \ot, \unit \right )$ a finite tensor category over $k$. If \emph{\textbf{Fg}} holds, and $\Kdim \Ho ( \C ) \ge 3$, then the category $\C$ is of wild representation type.
\end{corollary}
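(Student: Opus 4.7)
The plan is to deduce this from Theorem~\ref{thm:main} by taking $\M = \C$, viewing $\C$ as a finite exact module category over itself via its own tensor product. Recall that every finite tensor category is trivially a finite exact module category over itself, as noted in the paragraph introducing module categories in Section~\ref{sec:prelim}. Thus the hypotheses of Theorem~\ref{thm:main} on $\left(\C,\ot,\unit\right)$ and on $\M$ reduce to the single hypothesis \textbf{Fg} together with the existence of an object of complexity at least three.

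To supply such an object, I would invoke the result cited just before the corollary: by~\cite[Theorem 4.1]{BPW1}, when \textbf{Fg} holds one has the equality $\cx_{\C}(\unit) = \Kdim \Ho(\C)$, and moreover $\cx_{\C}(X) \le \cx_{\C}(\unit)$ for every $X \in \C$. Hence the hypothesis $\Kdim \Ho(\C) \ge 3$ translates directly into $\cx_{\C}(\unit) \ge 3$, so that the unit object itself serves as the required witness.

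With $M = \unit$ playing the role of the object of complexity at least three in $\M = \C$, Theorem~\ref{thm:main} applies verbatim and yields that $\C$ is of wild representation type. There is no genuine obstacle here: all the hard work is done by Theorem~\ref{thm:main}, and the corollary is essentially a translation of its hypothesis from the module-category language (existence of a complexity-three object) into the intrinsic language of $\C$ (Krull dimension of the cohomology ring), made possible by the identification of $\cx_{\C}(\unit)$ with $\Kdim \Ho(\C)$.
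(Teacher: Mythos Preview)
Your proposal is correct and matches the paper's own argument essentially verbatim: the paper also observes (in the paragraph immediately preceding the corollary) that under \textbf{Fg} one has $\cx_{\C}(\unit)=\Kdim\Ho(\C)$ by \cite[Theorem 4.1]{BPW1}, so the Krull dimension hypothesis furnishes an object of complexity at least three, and then Theorem~\ref{thm:main} applied to $\M=\C$ gives the result.
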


We end by illustrating the above theorem and corollary with examples of higher Verlinde categories and crossed product categories.

\begin{example}\label{example:Verlinde}
(Higher Verlinde categories.)
Let $k$ be an algebraically closed field of positive characteristic~$p$,
and let~$n$ be a positive integer.
Let $\C = \Ver_{p^n}$ be a Verlinde category (see for example~\cite{BE,CEO} for
definitions).
These are incompressible finite tensor categories of moderate growth.
Their importance is due to the result of
Coulembier, Etingof, and Ostrik~\cite{CEO} that every pretannakian category
of moderate growth has a tensor functor to an incompressible category of
moderate growth. In positive characteristic, the latter are conjectured to be
precisely the subcategories of these Verlinde categories (including $\Ver_{p^{\infty}}$)
as classified in~\cite[Corollary 4.6]{BEO}.
The $n=1$ case $\Ver_p$ is semisimple for all~$p$, being the semisimplification of the
category of representations of $\Z/p\Z$.
Benson and Etingof~\cite{BE} proved generally that $\coh^*(\Ver_{p^n})$ 
is finitely generated of Krull dimension $n-1$.
By our Corollary~\ref{cor:main} then, $\Ver_{p^n}$ is of wild representation type
for all $p$ and all $n\geq 4$. 
Likewise, by Theorem~\ref{thm:main}, any finite exact $\Ver_{p^n}$-module category having an object of complexity at least 3 is of wild representation type, as long as $n\geq 4$.
The corollary does not apply to the nonsemisimple
Verlinde categories $\Ver_{p^2}$ and $\Ver_{p^3}$.
\end{example}

\begin{example}\label{example:crossed}
(Crossed product categories.)
In~\cite{BPW3}, we studied crossed product categories $\C\rtimes G$ where 
$\C$ is a finite tensor category and $G$ is a finite group acting by tensor autoequivalences.
As abelian categories, these are equivalent to the Deligne products $\C\boxtimes\Vect_G$, where $\Vect_G$ is the category of $G$-graded finite dimensional vector spaces over $k$.
The crossed product category $\C\rtimes G$ is a finite exact $\C$-module category.
Thus Theorem~\ref{thm:main} applies to $\C\rtimes G$. 
\end{example}




\begin{thebibliography}{999}




\bibitem{AM}
M.\ F.\ Atiyah, I.G.\ Macdonald, \emph{Introduction to Commutative Algebra}, Addison-Wesley Publishing Co., Reading, Mass.-London-Don Mills, Ont.,1969, ix+128 pp.

\bibitem{ARS}
 M.\ Auslander, I.\ Reiten, S.O.\ Smal{\o}, \emph{Representation theory of Artin algebras}, Cambridge Stud.\ Adv.\ Math., 36, Cambridge University Press, Cambridge, 1995, xiv+423 pp.

\bibitem{B}
D.\ J.\ Benson, \emph{Representations and cohomology. II. Cohomology of groups and modules}, Cambridge Stud.\ Adv.\ Math., 31, Cambridge University Press, Cambridge, 1991, x+278 pp.

\bibitem{BE}
D.\ J.\ Benson, P.\ Etingof, \emph{On cohomology in symmetric tensor categories in prime characteristic}, Homology, Homotopy and Applications 24 (2022), no.\ 2, 163--193.

\bibitem{BEO}
D.\ J.\ Benson, P.\ Etingof, V.\ Ostrik, \emph{New incompressible symmetric tensor categories in positive characteristic}, Duke Math.\ J.\ 172 (2023), no.\ 1, 105--200.

\bibitem{BPW1}
P.\ A.\ Bergh, J.\ Y.\ Plavnik, S.\ Witherspoon, \emph{Support varieties for finite tensor categories:
complexity, realization, and connectedness}, J.\ Pure Appl.\ Algebra 225 (2021), no.\ 9, Paper No.\ 106705, 21 pp.

\bibitem{BPW3}
P.\ A.\ Bergh, J.\ Y.\ Plavnik, S.\ Witherspoon, \emph{Suport varieties without the tensor product property}, Bull.\ London Math.\ Soc.\ 56 (2024), 2150--2161.

\bibitem{BPW2}
P.\ A.\ Bergh, J.\ Y.\ Plavnik, S.\ Witherspoon, \emph{Support varieties for finite tensor categories: the tensor product property}, Ann.\ Repr.\ Th.\ 1 (2024), 4, p.\ 539--566. 

\bibitem{BS}
P.\ A.\ Bergh, {\O}.\ Solberg, \emph{Relative support varieties}, Q.\ J.\ Math.\ 61 (2010), no.\ 2, 171--182.

\bibitem{CEO}
K.\ Coulembier, P.\ Etingof, V.\ Ostrik, \emph{Incompressible tensor categories}, Adv.\ Math.\ 457 (2024), Paper No.\ 109935, 65 pp.

\bibitem{CB}
W.\ W.\ Crawley-Boevey, \emph{On tame algebras and bocses}, Proc.\ London Math.\ Soc.\ (3) 56 (1988), no.\ 3, 451--483.

\bibitem{D}
J.\ A.\ Drozd, \emph{Tame and wild matrix problems} (Russian), in \emph{Representations and quadratic forms} (Russian), pp.\ 39--74, 154
Akad.\ Nauk Ukrain.\ SSR, Inst.\ Mat., Kiev, 1979.

\bibitem{E}
D.\ Eisenbud, \emph{Commutative algebra. With a view toward algebraic geometry}, Graduate Texts in Mathematics, 150, Springer-Verlag, New York, 1995, xvi+785 pp.


\bibitem{Er} 
K.\ Erdmann, \emph{Blocks of tame representation type and related algebras}, Lecture Notes in Math., 1428, Springer-Verlag, Berlin, 1990, xvi+312 pp.

\bibitem{ESW} 
K.\ Erdmann, {\O}.\ Solberg, X.\ Wang, \emph{On the structure and cohomology ring of connected Hopf algebras}, J.\ Algebra 527 (2019), 366--398.

\bibitem{EGNO} 
P.\ Etingof, S.\ Gelaki, D.\ Nikshych, V.\ Ostrik, \emph{Tensor categories}, Mathematical Surveys and Monographs 205, American Mathematical Society, Providence, RI, 2015.
 
\bibitem{EO} 
P.\ Etingof, V.\ Ostrik, \emph{Finite tensor categories}, Mosc.\ Math.\ J.\ 4 (2004), no.\ 3, 627--654, 782--783. 

\bibitem{F2}
R.\ Farnsteiner, \emph{Support spaces and Auslander-Reiten components}, Contemp.\ Math., 442, American Mathematical Society, Providence, RI, 2007, 61--87.

\bibitem{F}
R.\ Farnsteiner, \emph{Tameness and complexity of finite group schemes}, Bull.\ Lond.\ Math.\ Soc.\ 39 (2007), no.\ 1, 63--70.

\bibitem{FS}
R.\ Farnsteiner, A.\ Skowro{\'n}ski, \emph{Classification of restricted Lie algebras with tame principal block}, J.\ Reine Angew.\ Math. 546 (2002), 1--45.

\bibitem{FW1}
J.\ Feldvoss, S.\ Witherspoon, \emph{Support varieties and representation type of small quantum groups}, Int.\ Math.\ Res.\ Not.\ IMRN(2010), no.\ 7, 1346--1362. Erratum: Int.\ Math.\ Res.\ Not.\ IMRN(2015), no.\ 1, 288--290. 

\bibitem{FW2}
J.\ Feldvoss, S.\ Witherspoon, \emph{Support varieties and representation type of self-injective algebras}, Homology Homotopy Appl.\ 13 (2011), no.\ 2, 197--215.



\bibitem{H}
J.\ E.\ Humphreys, \emph{Ordinary and modular representations of Chevalley groups}, Lecture Notes in Math., vol.\ 528, Springer-Verlag, Berlin-New York, 1976, iii+127 pp.

\bibitem{L}
G.\ Liu, \emph{Support varieties and representation types for basic classical Lie superalgebras}, J.\ Algebra 362 (2012), 157--177.

\bibitem{M}
H.\ Matsumura, \emph{Commutative algebra}, Second edition, Mathematics Lecture Note Series, 56, Benjamin/Cummings Publishing Co., Inc., Reading, Mass., 1980, xv+313 pp. 

\bibitem{NP}
C.\ Negron, J.\ Y.\ Plavnik, \emph{Cohomology of finite tensor categories: duality and Drinfeld centers}, Trans.\ Amer.\ Math.\ Soc. 375 (2022), 2069--2112.

\bibitem{NWW}
V.\ C.\ Nguyen, L.\ Wang, X.\ Wang, \emph{Classification of connected Hopf algebras of dimension $p^3$ I}, J.\ Algebra 424 (2015), 473--505.

\bibitem{P}
W.\ Pfautsch, \emph{$\Ext^1$ for the Frobenius kernels of $SL_2$}, Comm.\ Algebra 13 (1985), no.\ 1, 169--179.

\bibitem{R} 
J.\ Rickard, \emph{The representation type of self-injective algebras}, Bull.\ London Math.\ Soc.\ 22 (1990), no.\ 6, 540--546.

\bibitem{Ri}
C.\ M.\ Ringel, \emph{The indecomposable representations of the dihedral $2$-groups}, Math.\ Ann.\ 214 (1975), 19--34.

\bibitem{SiS}
D.\ Simson, A.\ Skowro{\'n}ski, \emph{Elements of the representation theory of associative algebras. Vol. 3}, London Math.\ Soc.\ Stud.\ Texts, 72, Cambridge University Press, Cambridge, 2007, xii+456 pp.

\bibitem{SY}
A.\ Skowro{\'n}ski, K.\ Yamagata, \emph{Frobenius algebras. I}, EMS Textbk.\ Math., European Mathematical Society (EMS), Z{\"u}rich, 2011, xii+650 pp.

\bibitem{SA}
M.\ Su\'arez-\'Alvarez, \emph{The Hilton-Eckmann argument for the anti-commutativity of cup products}, Proc.\ Amer.\ Math.\ Soc.\ 132 (2004), no.\ 8, 2241--2246.

\bibitem{WW}
B.\ Wald, J.\ Waschb{\"u}sch, \emph{Tame biserial algebras}, J.\ Algebra 95 (1985), no.\ 2, 480--500.


\end{thebibliography}
\end{document}